\newtheorem{lemma}{Lemma}
\newtheorem{theorem}{Theorem}
\newtheorem*{theorem*}{Theorem}
\newtheorem{claim}{Claim}
\begin{document}
\newcommand{\eps}{{\varepsilon}}
\newcommand{\proofend}{$\Box$\bigskip}
\newcommand{\C}{{\mathbb C}}
\newcommand{\Q}{{\mathbb Q}}
\newcommand{\R}{{\mathbb R}}
\newcommand{\Z}{{\mathbb Z}}
\newcommand{\RP}{{\mathbb {RP}}}
\newcommand{\CP}{{\mathbb {CP}}}
\newcommand{\Tr}{\rm Tr}
\newcommand{\g}{\gamma}
\newcommand{\G}{\Gamma}
\newcommand{\e}{\varepsilon}
\newcommand{\kk}{\kappa}
\newcommand{\ph}{\hat{p}}
\newcommand{\qh}{\hat{q}}
\newcommand{\rh}{\hat{r}}

\title{Loewner's ``forgotten" theorem}

\author{Peter Albers\footnote{
Mathematisches Institut,
Universit\"at Heidelberg,
69120 Heidelberg,
Germany;
peter.albers@uni-heidelberg.de}
 \and 
 Serge Tabachnikov\footnote{
Department of Mathematics,
Pennsylvania State University,
University Park, PA 16802,
USA;
tabachni@math.psu.edu}
} 

\date{\today}
\maketitle

An oriented smooth closed curve partitions the plane into a number of regions. To every point not on the curve the rotation number is assigned, that is, the number of complete turns that the curve makes about this point. The rotation number is an integer: its sign indicates whether the total rotation is counter-clock or clock-wise.

 These numbers are the same for the points inside one region, but they change when the point crosses the curve, generically by one, see Figure \ref{numbers}.

\begin{figure}[ht]
\centering
\includegraphics[width=.3\textwidth]{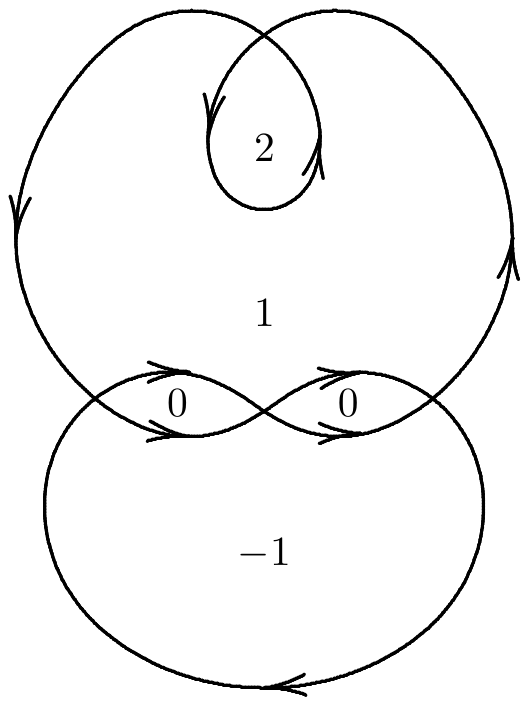}\qquad\qquad\qquad\qquad
\includegraphics[width=.2\textwidth]{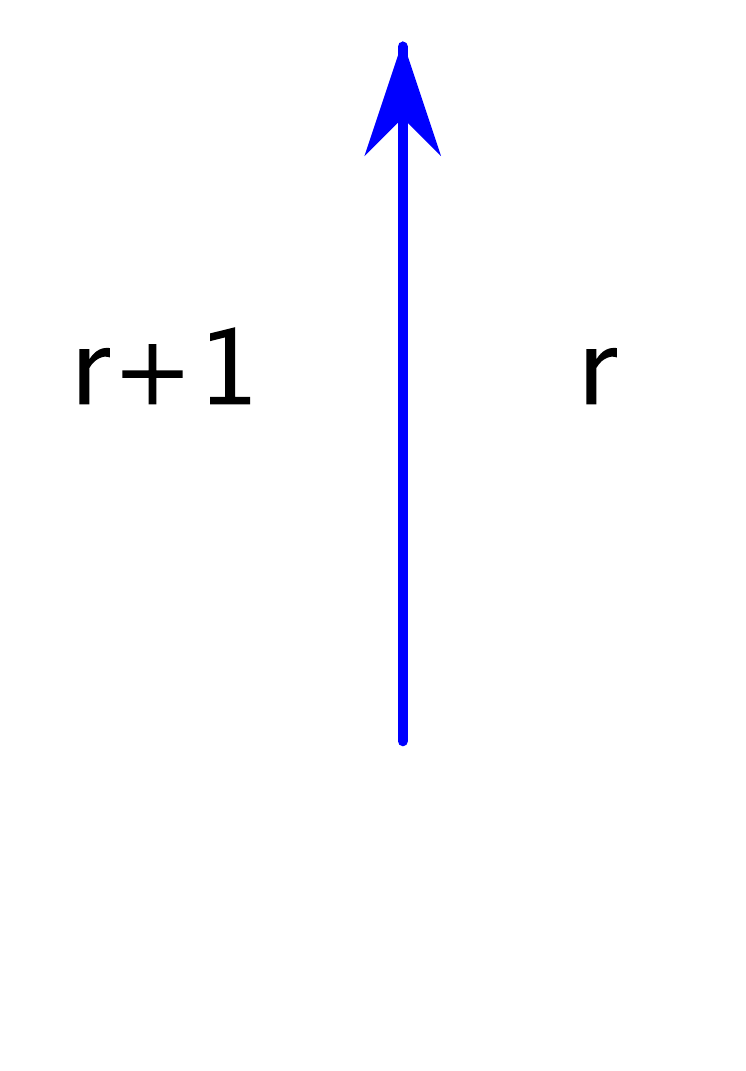}
\caption{On the left: rotation numbers of a specific curve. On the right: ``the wall-crossing" formula, i.e.,~if a point crosses an oriented arc (which is part of the curve) from right to left, the rotation number grows by 1.
}
\label{numbers}
\end{figure}

For a curve  $\g$ and a point $x$ not on $\g$, denote the corresponding rotation number by $r_\g(x)$.
Let $f(t)$, $t\in\R$, be a smooth periodic function, and consider the curve $\g(t)=(f'(t),f(t))$; such curves are called \textit{holonomic}. We assume that $\g$ is immersed, that is, the velocity vector $\g'(t)$ never vanishes.\footnote{As a matter of fact, throughout we will (mostly implicitly) assume further non-degeneracy assumptions, all of which hold after small perturbations of the data involved. We leave it to the careful reader to spell these assumptions out.}

\begin{claim} \label{cl:one}
For every point $x$ not on this curve, one has $r_\g(x)\ge 0$.
\end{claim}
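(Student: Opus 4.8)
The plan is to evaluate $r_\gamma(x)$, for a point $x = (a,b)$ not on $\gamma$, as the signed number of intersections of $\gamma$ with the rightward horizontal ray $R_x = \{(a+s,b) : s \ge 0\}$, a crossing at which $\gamma$ moves upward counted $+1$ and one at which it moves downward counted $-1$. (This is the wall-crossing rule of Figure~\ref{numbers} applied along $R_x$, using that $r_\gamma$ is zero far from the curve.) A \emph{horizontal} ray is the natural choice here: along a holonomic curve the vertical component of the velocity $\gamma'(t) = (f''(t), f'(t))$ is exactly $f'(t)$, which is also the first coordinate of $\gamma(t)$. Hence $\gamma$ meets $R_x$ precisely at the parameters $t$ with $f(t) = b$ and $f'(t) > a$ — the value $f'(t) = a$ cannot occur, as it would force $\gamma(t) = x$ — and each such crossing contributes $+1$ when $f'(t) > 0$ and $-1$ when $f'(t) < 0$.

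Next I would split into two cases. If $a \ge 0$, then $f'(t) > a \ge 0$ at every crossing, so $r_\gamma(x) = \#\{t : f(t) = b,\ f'(t) > a\} \ge 0$ — geometrically, to the right of the vertical axis the curve only moves upward, so every crossing seen by a point of the right half-plane is positive. If $a < 0$, note that $f'(t) > 0$ already implies $f'(t) > a$, while $f'(t) < 0$ together with $f'(t) > a$ means $a < f'(t) < 0$; so, counting over one period,
\[
 r_\gamma(x) = \#\{f(t) = b,\ f'(t) > 0\} - \#\{f(t) = b,\ a < f'(t) < 0\} .
\]
After a harmless perturbation (permitted by the standing non-degeneracy assumptions, moving $x$ within its region if necessary) we may take $b$ to be a regular value of $f$; then, as $t$ runs once around the period, $f$ meets the level $b$ transversally at finitely many parameters, consecutive ones alternating between $f' > 0$ and $f' < 0$, whence $\#\{f(t) = b,\ f'(t) > 0\} = \#\{f(t) = b,\ f'(t) < 0\}$. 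Since the set $\{f(t) = b,\ a < f'(t) < 0\}$ is contained in $\{f(t) = b,\ f'(t) < 0\}$, the right-hand side above is $\ge 0$, and the claim follows.

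I do not expect a genuine obstacle; the one point that needs care is the orientation bookkeeping — matching the sign convention for crossings of $R_x$ with the wall-crossing rule — together with the transversality of $\gamma \cap R_x$, which is why $b$ is taken to be a regular value of $f$ (this is automatic when $a \ge 0$, where every crossing already has $f'(t) > 0$). As a consistency check one can apply the formula to $f(t) = c\sin t$, whose holonomic curve is the counterclockwise circle of radius $|c|$: it returns $1$ for points inside and $0$ for points outside, exactly as it should.
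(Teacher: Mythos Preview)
Your argument is correct and rests on exactly the observation the paper uses: along a holonomic curve the vertical component of $\gamma'(t)$ is $f'(t)$, which is also the first coordinate of $\gamma(t)$, so in the right half-plane the curve only moves up and in the left half-plane only down. For $a\ge 0$ your proof and the paper's coincide.

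The one difference is in the case $a<0$. The paper simply sends the point out along the \emph{leftward} ray instead; there every crossing has $f'(t)<0$, hence again every crossing drops the rotation number by $1$, and one concludes immediately without any further counting. Your choice to keep the rightward ray is perfectly valid, but it costs the extra step of matching the up- and down-crossings of the level $\{f=b\}$ over a period (and the attendant perturbation to make $b$ a regular value). The paper's symmetric choice of ray direction buys a shorter, perturbation-free argument; your version has the minor advantage of giving an explicit formula for $r_\gamma(x)$ in all cases.
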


In particular, the curve depicted in Figure \ref{numbers} cannot have a parameterization $(f'(t),f(t))$ for any periodic function $f$.

Claim \ref{cl:one} is the simplest case of a theorem of C. Loewner that, in the full generality, we will formulate later. For now, we prove Claim \ref{cl:one}.

Let us investigate how a holonomic curve may intersect a horizontal line $\{f=c\}$. Assuming that this intersection is transverse, that is, $f'\neq 0$ at the intersection point, there are two possibilities depicted in Figure \ref{intersect}. 

\begin{figure}[ht]
\centering
\includegraphics[width=0.9\textwidth]{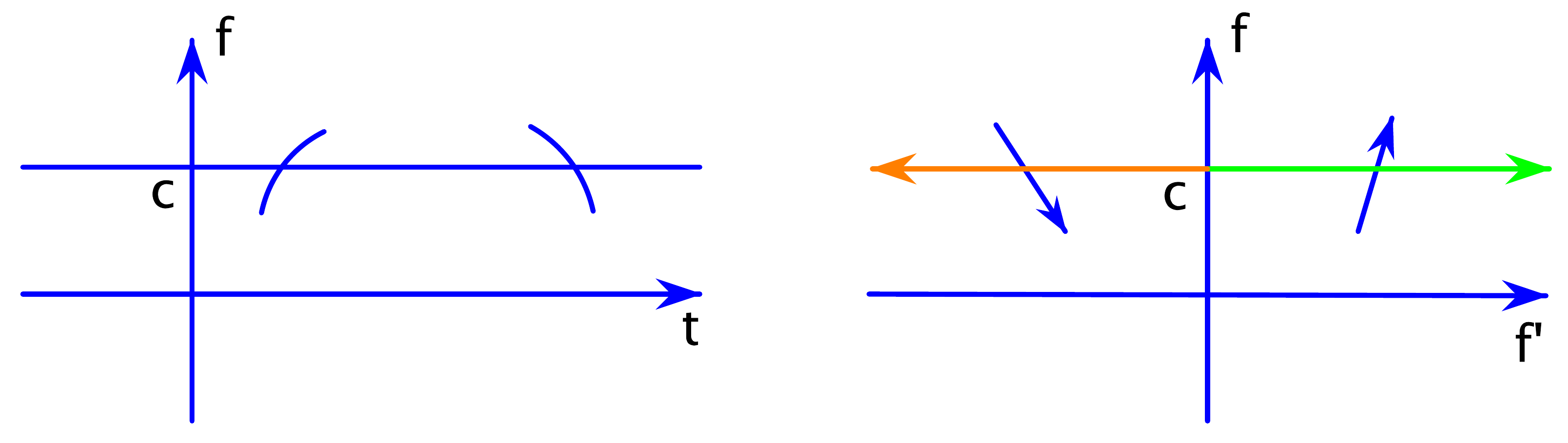}
\caption{Left: intersections of the graph of $f(t)$ with the horizontal line $\{f=c\}$. Right: intersections of the respective holonomic curve with the horizontal line $\{f=c\}$.
}
\label{intersect}
\end{figure}

On the right hand side in Figure \ref{intersect} we consider the horizontal ray given by $\{f'>0, f=c\}$. We notice that the curve $\g=(f',f)$ necessarily intersects this ray ``going upwards'', as indicated. More precisely, if this ray is oriented to the right, the curve $\g=(f',f)$ must intersect this rays from the right side to the left, and not the other way around. The reason simply is that, if $f'(t)>0$, then the value of $f$ is growing near $t$, i.e.,~the second coordinate $f$ of the curve $\g=(f',f)$ is growing. By the same token, if the ray $\{f'<0, f=c\}$ is oriented to the left, the curve $\g=(f',f)$ again must intersect this ray from the right side to the left. 

If we combine this observation with the wall-crossing formula (Figure \ref{numbers}) we conclude the following. If a points moves along the horizontal ray given by $\{f'>0, f=c\}$ out to infinity then, every time it passes the curve, the rotation number around this point drops by $1$. The same happens if a point moves along $\{f'<0, f=c\}$ out to infinity. 

We are ready to prove Claim \ref{cl:one}. Since the curve $\g$ is periodic, it is contained inside a bounded region and therefore the rotation number around points far away from the origin is zero. Now, let $x=(a,b)\notin \g$ and assume for now that $a>0$. Then moving the point out horizontally to the right repeatedly drops the rotation number by $1$ until it eventually becomes zero. Thus, $r_\gamma(x)\geq0$. 

If $a<0$, moving the point horizontally to the left has the same effect of dropping the rotation number eventually down to zero, and again $r_\gamma(x)\geq0$. If $a=0$, move the point a tiny bit left or right and use that the rotation number doesn't change under a small change of $x$. This finishes the proof.

\medskip

It is  worth mentioning another geometric fact: an immersed holonomic curve intersects the vertical axis in the orthogonal direction. Indeed, these points correspond to the critical points of the function $f$, and the orientation of $\g$ at the local minima is to the right, and at the local maxima to the left (since $\g'=(f'',f')$, the second derivative generically does not vanish at the critical points), see Figure \ref{extr}. 

\begin{figure}[ht]
\centering
\includegraphics[width=1\textwidth]{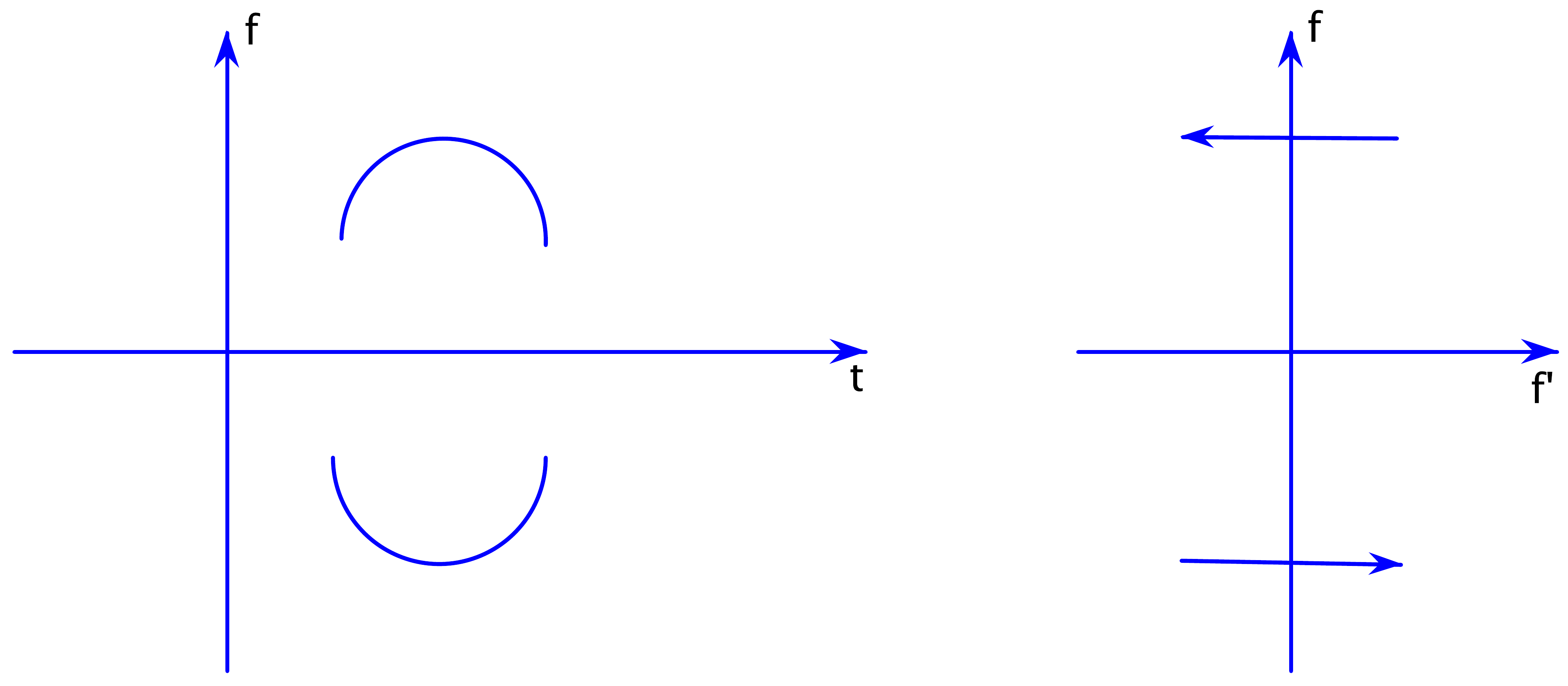}
\caption{Local extrema and the intersections of a holonomic curve with the vertical coordinate axis.
}
\label{extr}
\end{figure}

This implies immediately that the Whitney winding number of $\g$, which by definition is the winding number of $\g'$, equals the number of critical points of $f(t)$. We refer to \cite{Be,BW,EW,Va} for results on holonomic curves and holonomic knots, the knots parameterized as $(f(t),f'(t),f''(t))$. 

Now we present the second simplest particular case of Loewner's theorem; this result and its proof  below are due to G. Bol \cite{Bo}, see also \cite{Kl}.

As before, let $f(t)$ be a periodic function, and now consider the curve $\g(t)=(f''(t)-f(t),f'(t))$ (not to be confused with the curve discussed above). 

\begin{claim} \label{cl:two}
For every point $x$ not on this curve, one has $r_\g(x)\ge 0$.
\end{claim}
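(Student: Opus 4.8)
The plan is to follow the recipe of Claim~\ref{cl:one}: push every point off to infinity and keep track of the rotation number via the wall-crossing formula. Since $f$ is periodic, $f,f',f''$ are bounded, so $\gamma$ lies in a bounded region and $r_\gamma$ vanishes far from the origin. By the wall-crossing formula, when a point crosses an oriented arc of $\gamma$ from its left to its right the number $r_\gamma$ drops by one; hence it suffices to produce, for each $x\notin\gamma$, a path from $x$ to infinity meeting $\gamma$ only in this way, which then forces $r_\gamma(x)$ to equal the (nonnegative) number of such crossings.

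First I would dispose of the points lying far from the vertical axis. The second coordinate of $\gamma$ is $f'$, whose derivative is $f''=(f''-f)+f$. Put $M=\max|f|$. On the half-plane $\{f''-f>M\}$ one has $f''>0$, so there $\gamma$ crosses every horizontal line going upwards — exactly the phenomenon exploited in Claim~\ref{cl:one}. Therefore, pushing a point with first coordinate $>M$ horizontally to the right, it stays in this half-plane and meets $\gamma$ only ``going upwards''; each such crossing drops $r_\gamma$, which reaches $0$, so $r_\gamma\ge 0$ there. Symmetrically, on $\{f''-f<-M\}$ one has $f''<0$, and pushing to the left gives $r_\gamma\ge 0$.

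What remains, and is the heart of the matter, is the vertical strip $|f''-f|\le M$. Here no straight-line push works: the sign of $f''$ is not pinned down on the strip, so a horizontal (or any rectilinear) push meets $\gamma$ with crossings of both signs. The approach I would take is to reduce to Claim~\ref{cl:one}: writing $\delta(t)=(f'(t),f(t))$ for the holonomic curve of Claim~\ref{cl:one}, one has $\gamma(t)=\big(\delta_1'(t)-\delta_2(t),\ \delta_1(t)\big)$ and $r_\delta\ge 0$; this operation ``$\delta\mapsto\gamma$'' is in fact the inductive step of Loewner's theorem in general, and the claim is that it cannot create a component of negative rotation number. The main obstacle is making this reduction rigorous: the naive linear homotopy from $\delta$ to $\gamma$ again has wall-crossings of both signs, so one must either use the finer structure of holonomic curves or replace the local bookkeeping by a global argument. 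As a consistency check that such an argument is available, note that the signed area enclosed by $\gamma$ is $\tfrac12\oint_\gamma(x\,dy-y\,dx)=\int_0^{2\pi}\!\big((f'(t))^2+(f''(t))^2\big)\,dt\ge 0$; a suitable strengthening of this, controlling how the bounded complementary components of $\R^2\setminus\gamma$ must sit inside the strip, would complete the proof.
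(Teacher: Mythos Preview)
Your write-up is not a proof: you handle the easy half-planes $\{|f''-f|>M\}$ correctly, but for the strip $|f''-f|\le M$ you only sketch two ideas and explicitly leave both unfinished. The signed-area computation is a red herring: it says the \emph{area-weighted average} of $r_\gamma$ is nonnegative, which does not prevent individual regions from carrying $r_\gamma<0$.

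More importantly, your dismissal of the homotopy idea is mistaken, and this is exactly the missing ingredient. You assert that ``the naive linear homotopy from $\delta$ to $\gamma$ again has wall-crossings of both signs''. The straight-line homotopy $(1-s)\delta+s\gamma$ perhaps does; but the \emph{right} deformation is
\[
\Gamma_s(t)=\bigl((1-s)f''(t)-f(t),\,f'(t)\bigr),\qquad 0\le s\le 1,
\]
which keeps the second coordinate fixed. Then $\Gamma_0=\gamma$ and $\Gamma_1=(-f,f')$ is an orientation-preserving linear image of $\delta=(f',f)$. At a wall-crossing $x=\Gamma_\sigma(\tau)$, the sign is that of
\[
\det\!\left(\Gamma_\sigma'(\tau),\ \tfrac{d}{ds}\Gamma_s(\tau)\big|_{s=\sigma}\right)
=\det\begin{pmatrix}\ast & -f''(\tau)\\ f''(\tau) & 0\end{pmatrix}
=(f''(\tau))^2\ge 0,
\]
so every crossing during the deformation lowers $r_{\Gamma_s}(x)$. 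Hence $r_\gamma(x)=r_{\Gamma_0}(x)\ge r_{\Gamma_1}(x)=r_\delta(Ax)\ge 0$ by Claim~\ref{cl:one}. This \emph{is} the inductive step of Loewner's theorem, and it works here without any further ``finer structure''. Your splitting into the easy half-planes and the strip is then unnecessary.

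For comparison, the paper's own proof of Claim~\ref{cl:two} (due to Bol) is entirely different and avoids homotopies: for $x=(a,b)$ with $b\le 0$ one shoots the horizontal ray to the right and shows, via the auxiliary function $g(t)=f(t)-bt$, that every downward crossing of $\gamma$ with the ray is followed by an upward crossing farther to the right; the key inequality is $g''(t_2)-g(t_2)-bt_2>g''(t_1)-g(t_1)-bt_1$ between a local maximum $t_1$ of $g$ and the next local minimum $t_2$. So your outline was aiming at the second, inductive proof rather than Bol's direct one, but stopped just short of the one-line determinant computation that makes it go through.
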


\begin{proof}
Let $x=(a,b)\notin \g$ be a point, and assume that $b\le 0$. We compute the rotation number $r_\g(x)$ by counting, with sign, the number of intersections of the curve $\g$ with the horizontal ray $\{(a+s,b)\mid s>0\}$. 

Suppose that the curve intersects this ray in downward direction, that is, moving the point outwards along the ray increases the rotation number by $1$ (as opposed to the situation in the proof of Claim \ref{cl:one}). This poses not a problem, since we will show below that, if we move further along the ray $\{(a+s,b)\mid s>0\}$ to the right, i.e.,~with growing $s$, then there necessarily is another intersection point with $\g$ and this is pointing upwards, see Figure \ref{intersection3}. In particular, the rotation number drops by $1$. 

Thus, when computing the rotation number, every downward intersection of $\g$ with $\{(a+s,b)\mid s>0\}$ is eventually followed by an upward intersection canceling the previous change in rotation number and hence, for $s$ large enough, one has $0=r_\g(a+s,b) \leq r_\g(a,b)$, as we needed to prove. 

\begin{figure}[ht]
\centering
\includegraphics[width=1\textwidth]{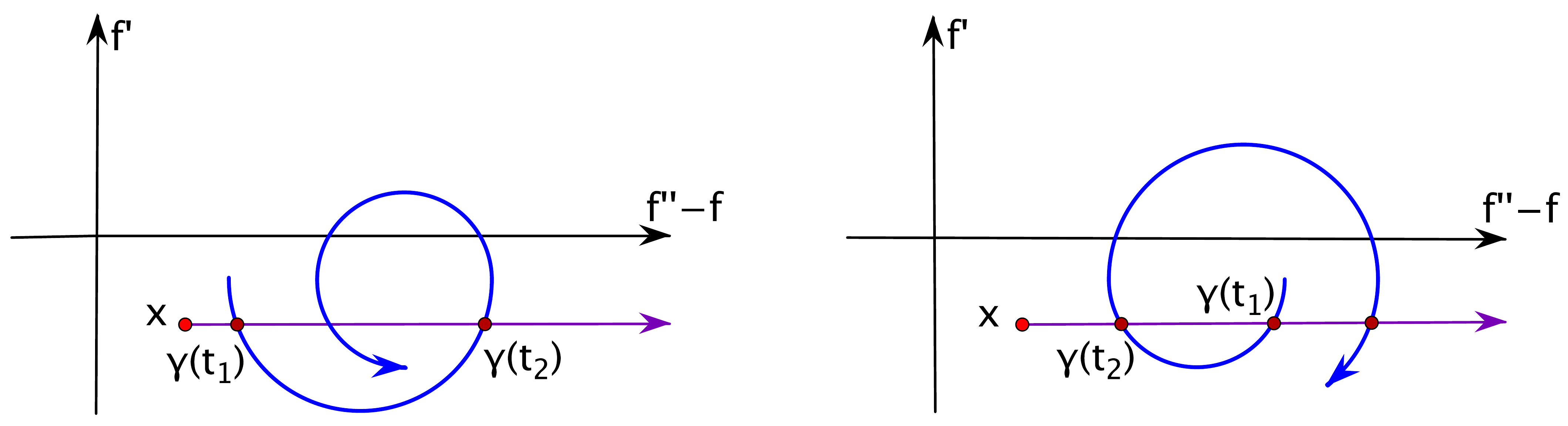}
\caption{Left: a downward intersection of the curve $\g(t)$ with the horizontal ray is followed by an upward intersection farther on the right. Right: an impossible situation.
}
\label{intersection3}
\end{figure}

Thus, we are left with the task of showing that every downward intersection of $\g$ with $\{(a+s,b)\mid s>0\}$ is eventually followed by an upward intersection.  For that let $\g$ intersect the horizontal ray $\{(a+s,b)\mid s>0\}$ at a point $\g(t_1)$ in downward direction. Since $\g$ is a closed curve it needs to intersect the horizontal \textit{line} $\{(a+s,b)\mid s\in\R\}$ in at least one more point. Let $t_2>t_1$ be the smallest parameter with $\g(t_2)\in\{(a+s,b)\mid s\in\R\}$. We will show that $\g(t_2)$ actually lies also on the ray $\{(a+s,b)\mid s>0\}$ and to the right of $\g(t_1)$, and it is an upward intersection, as is shown in Figure \ref{intersection3} on the left. 
 
Consider the auxiliary function $g(t)=f(t)-bt$. We note that $\g(t)=(f''(t)-f(t),f'(t))$ intersects the line $\{(a+s,b)\mid s\in\R\}$ in $t=\tau$ if and only if $g'(\tau)=0$, i.e.,~the intersections are precisely extrema of $g$. Moreover, the curve intersects this ray in the downward direction at $t=\tau$ if and only if $g$ has a strict local maximum in $\tau$ and in downward direction if and only if $g$ has a strict local minimum in $\tau$. 

By assumption $g$ has a local maximum in $t_1$ and, again by assumption, the next extremum of $g$ is in $t_2$. Thus, this needs to be a local minimum. Let us collect this information: $g'(t_1)=g'(t_2)=0$, $g''(t_1)<0$, $g''(t_2)>0$ and $g(t_2) < g(t_1)$. Combining this with $t_2>t_1$ and $b\leq0$, we conclude
$$
f''(t_2)-f(t_2)=g''(t_2)-g(t_2)-bt_2 > g''(t_1)-g(t_1)-bt_1=f''(t_1)-f(t_1).
$$
This is exactly saying that $\g(t_2)$ lies on the ray $\{(a+s,b)\mid s>0\}$ to the right of $\g(t_1)$. So, indeed every downward intersection of $\g$ with $\{(a+s,b)\mid s>0\}$ is eventually followed by an upward intersection.

To finish the proof, note that if $x=(a,b)$ with $b\geq0$, then the same argument works for the horizontal leftward ray $\{(a+s,b)\mid s<0\}$.
\end{proof}

See Figure \ref{prog} for curves illustrating Claims \ref{cl:one} and \ref{cl:two}. 

\begin{figure}[ht]
\centering
\includegraphics[width=.4\textwidth]{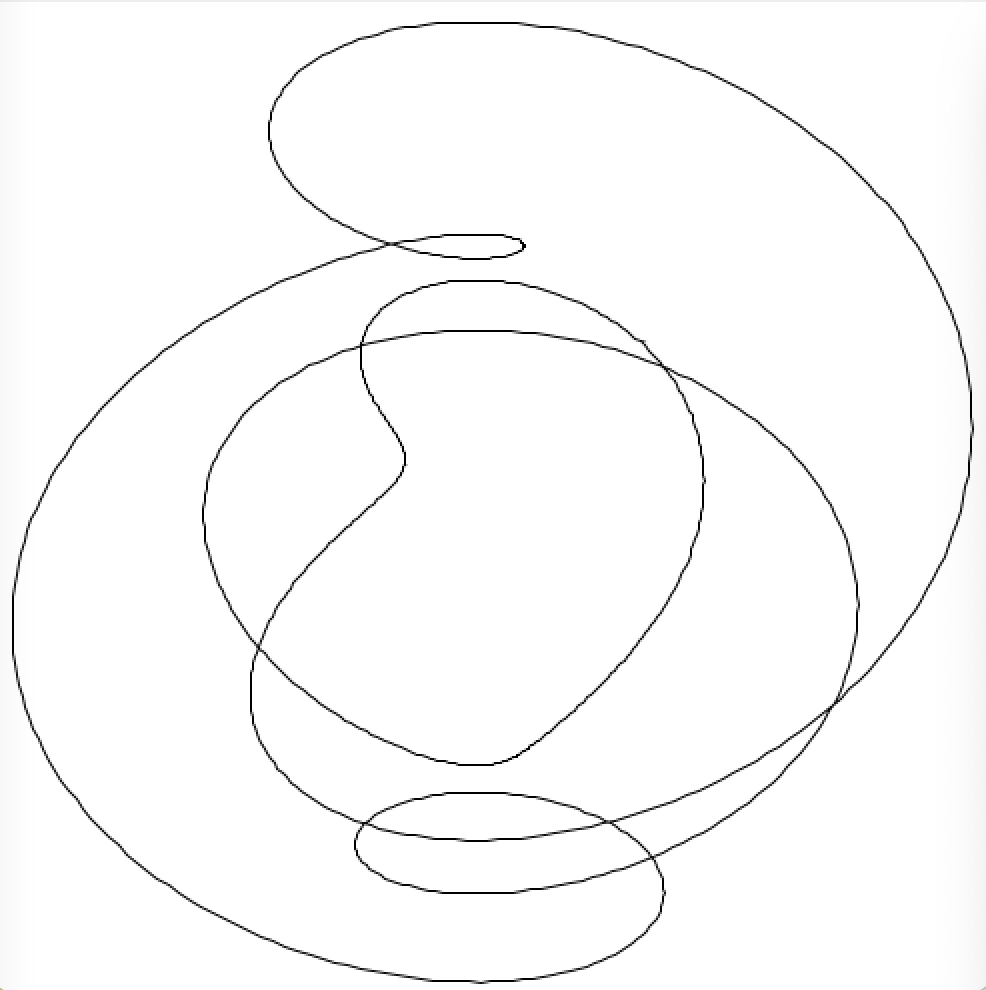}\qquad\qquad
\includegraphics[width=.4\textwidth]{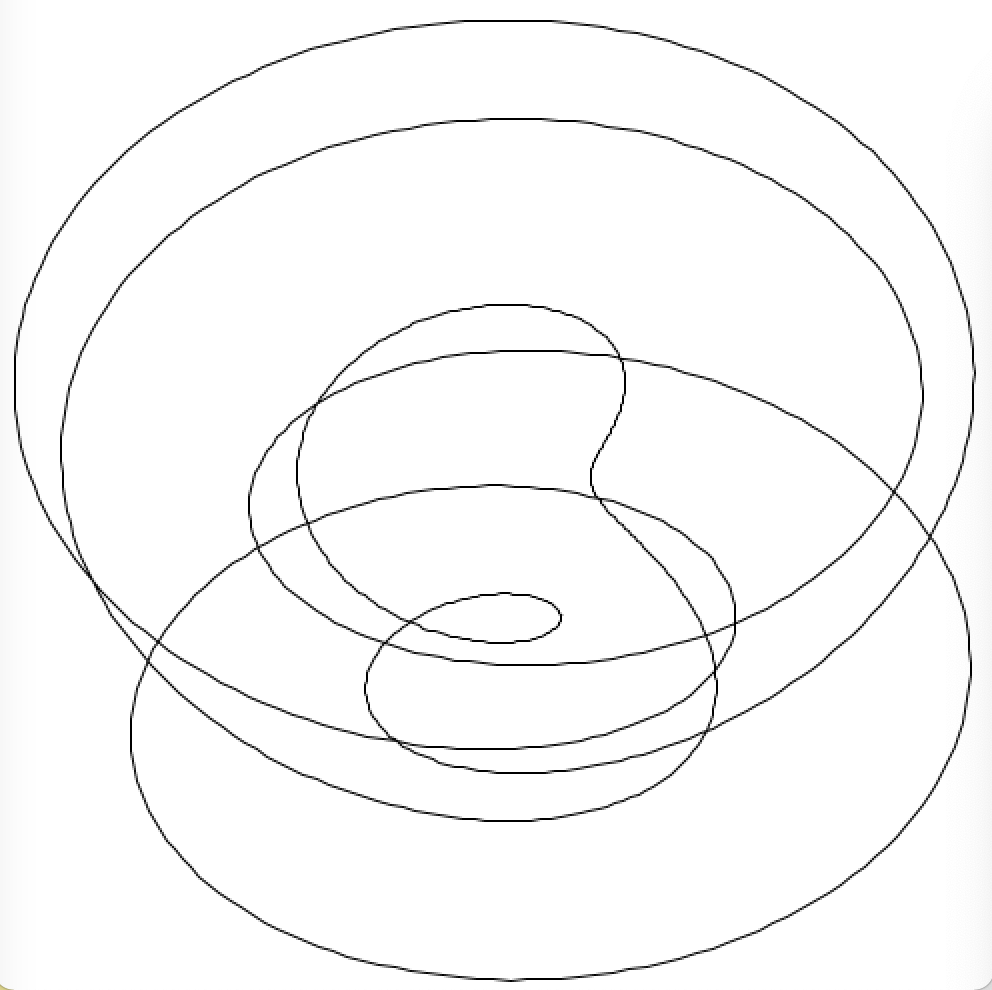}
\caption{The curves $(f'(t),f(t))$ and $(f''(t)-f(t),f'(t))$, where $f(t)$ are random trigonometric polynomials of degree 10, drawn by a computer program created by Richard Schwartz. 
}
\label{prog}
\end{figure}

\subsubsection*{Loewner's theorem}
Let us now present Loewner's full theorem and its (slightly modified) proof taken from \cite{Lo}. Consider a pair of real, monic polynomials
\begin{equation}\label{eqn:polys}
p(x)=\prod_{i=1}^n (x-a_i),\quad q(x)=\prod_{i=1}^{n-1} (x-b_i),
\end{equation}
of degrees $n$ and $n-1$, respectively. 
Assume that for both polynomials all roots are real and  interlaced:
$
a_1<b_1<a_2<b_2<\ldots<b_{n-1}<a_n.
$ 
We point out that, in particular, all roots are simple. Moreover, we also allow the case $n=1$, i.e.,~$p(x)=x-a$ and $q(x)=1$.

Turn the polynomials $p$ and $q$ into differential operators $\ph$ and $\qh$ acting on smooth periodic functions by replacing the variable $x$ by the derivative $\frac{d}{dt}$. That is, $(x-a)$ acts on $f$ as $(\frac{d}{dt}-a)f=f'-af$. Thus, from a  smooth periodic function $f(t)$, we obtain two new functions
$$
F(t)=\ph\big(f(t)\big),\quad G(t)=\qh\big(f(t)\big)
$$
which we combine into a curve $\g(t)=(F(t),G(t))$.

Let us consider two examples. First, if $p(x)=x$, $q(x)=1$, then 
$$
\ph(f(t))=\frac{d}{dt}f(t)=f'(t),\quad\qh(f(t))=1\cdot f(t)=f(t),
$$ 
and $\g=(f',f)$. As the second example, choose $p(x)=x^2-1$, $q(x)=x$, then 
$$
\ph(f(t))=f''(t)-f(t), \quad\qh(f(t))=f'(t),
$$
and $\g=(f''-f,f')$. This way we recover our two examples from above.

Coming back to the situation of general polynomials as in \eqref{eqn:polys}, we consider the curve $\g(t)=(F(t),G(t))$ and let $x$ be a point not on $\g$.

\begin{theorem}[Loewner] \label{thm:L}
For every periodic function $f(t)$, one has $r_\g(x)\ge 0$.
\end{theorem}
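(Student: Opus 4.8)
The plan is to complexify and then imitate, on a single well-chosen ray, the wall-crossing argument used for Claims \ref{cl:one} and \ref{cl:two}.

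First I would identify $\R^2$ with $\C$ via $(F,G)\mapsto F+iG$, so that $\gamma$ becomes the complex loop $\Gamma(t)=F(t)+iG(t)=\hat P\big(f(t)\big)$ where $P(x)=p(x)+iq(x)$ and $\hat P=P\big(\tfrac{d}{dt}\big)$, and $r_\gamma(x_0)$ is exactly the winding number of $\Gamma$ about the point $x_0\in\C$. The interlacing hypothesis enters precisely once, through the Hermite--Biehler theorem: the condition $a_1<b_1<\dots<b_{n-1}<a_n$ is equivalent to all $n$ roots of $P$ lying strictly on one side of the real axis, say in $\{\operatorname{Im}<0\}$. (For the chain of elementary factors $(x-a_i)+i(\cdots)$ this can also be verified by hand, which is all that is actually needed.)

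Next I would run the Claim \ref{cl:two} argument. Put $Q:=\overline{P(0)}\,P=Q_1+iQ_2$ with $Q_1,Q_2$ real polynomials; since $Q(0)=|P(0)|^2>0$ one has $Q_2(0)=0$, hence $Q_2=xR$ for a real polynomial $R$, and --- by Hermite--Biehler read backwards, $Q$ still having all roots in $\{\operatorname{Im}<0\}$ --- the polynomials $Q_1$ and $Q_2$ interlace. Compute $r_\gamma(x_0)$ by sliding $x_0$ to infinity along the ray in the direction of $P(0)$, i.e.\ of the vector $(p(0),q(0))$, and counting signed crossings. A point of $\Gamma$ lies on the line through $x_0$ carrying this ray exactly when $\hat Q_2 f(t)=c$ for the appropriate constant $c=c(x_0)$; since $\hat Q_2=\tfrac{d}{dt}\hat R$, this says $g:=\hat R f-ct$ has a critical point at $t$, and the position of the crossing along the ray is an increasing function of $\hat Q_1 f(t)$ (because $\operatorname{Re}(\overline{P(0)}\Gamma)=\hat Q_1 f$ for real $f$). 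Exactly as in Claim \ref{cl:two}, consecutive crossings alternate between the two directions, so a crossing that increases the winding number is followed, farther out on the ray, by one that decreases it --- provided the crossing positions vary monotonically along the ray --- and since the winding number is $0$ for $x_0$ far away, this gives $r_\gamma(x_0)\ge 0$. (When the sign of $c$ is wrong for this one uses the opposite ray, just as the cases $b\le 0$ and $b\ge 0$ split the proof of Claim \ref{cl:two}.)

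Everything thus reduces to one assertion: if $t_1<t_2$ are consecutive critical points of $g=\hat R f-ct$ then $\hat Q_1 f(t_2)-\hat Q_1 f(t_1)$ has the sign dictated by the case at hand; equivalently, between consecutive level-$c$ points of $\hat Q_2 f$ the function $\hat Q_1 f$ is monotone. For $n=1,2$ this is the one-line computation in Claims \ref{cl:one} and \ref{cl:two} (there $R=1$ and $Q_1=x$ or $1-x^2$), where $\hat Q_1 f$ is a low-order differential expression in $g$ itself. For general $n$ it is a genuinely nonlocal, Sturm-type comparison of the two periodic functions $\hat Q_1 f$ and $\hat Q_2 f$, and this is the main obstacle. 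I would try to extract it from the interlacing of $Q_1$ and $Q_2$ --- equivalently from $Q_2/Q_1$ being, up to sign, a Herglotz function with positive partial-fraction residues --- by peeling off one real root $\rho$ at a time, using that each block $\big(\tfrac{d}{dt}-\rho\big)^{-1}$ acting on periodic functions is sign-preserving, and tracking how the crossing pattern changes at each step. That monotonicity statement is the whole content of Loewner's theorem beyond Claims \ref{cl:one} and \ref{cl:two}, and getting it cleanly is where I expect the real work to be.
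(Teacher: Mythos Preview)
Your reduction to a ray-crossing count is sound, and you have honestly isolated the crux: for $n\ge 3$ the assertion ``between consecutive critical points of $g=\hat R f-ct$ the quantity $\hat Q_1 f$ moves the right way'' is no longer the one-line computation of Claim~\ref{cl:two}. Take for instance $p(x)=x(x^2-4)$, $q(x)=x^2-1$; then $Q_1=1-x^2$, $R=x^2-4$, and eliminating $f''$ via $g=f''-4f-ct$ gives $\hat Q_1 f=-g-ct-3f$. Hence $\hat Q_1 f(t_2)-\hat Q_1 f(t_1)$ contains the term $-3\bigl(f(t_2)-f(t_1)\bigr)$, which neither the max/min alternation of $g$ nor the sign of $c$ controls. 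Your proposed rescue, peeling off real linear factors and invoking sign-preservation of $(\tfrac{d}{dt}-\rho)^{-1}$ on periodic functions, does not obviously touch this: positivity of a Green's kernel governs pointwise signs of a single function, not the ordering of values of $\hat Q_1 f$ at the moving critical points of a different function $g$. What you are really asking for is a Sturm-type oscillation theorem for the pair $(\hat Q_1 f,\hat Q_2 f)$ that is at least as strong as Loewner's theorem itself, so the plan as written has a genuine gap at its center rather than a routine verification left to the end.

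The paper sidesteps this difficulty entirely. Rather than fix a ray and compare crossings, it runs the Euclidean algorithm on $(p,q)$: writing $p=(x-c)q-r$ produces a new interlacing pair $(q,r)$ of degrees $(n-1,n-2)$ (Lemma~\ref{lm:div}), and the homotopy $\G_s(t)=\bigl((1-s)G'(t)-cG(t)-H(t),\,G(t)\bigr)$ carries the curve for $(p,q)$ to an orientation-preserving linear image of the curve for $(q,r)$. A single $2\times 2$ determinant, equal to $(G'(\tau))^2\ge 0$, shows that whenever $\G_s$ sweeps across $x$ the rotation number can only drop, so $r_{\G_0}(x)\ge r_{\G_1}(x)$ and induction on $n$ finishes. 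This deformation is the missing idea: it trades the hard pointwise comparison you were aiming at for a soft monotonicity of the rotation number along the homotopy.
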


\begin{proof}
The proof is by induction on the degree $n$ of the polynomial $p$. The initial step $n=1$ is provided by Claim \ref{cl:one} in the following sense.

If $p(x)=x-a$ and $q(x)=1$, then $\g=(f'-af,f)$. The curve $\g$ is the image of the curve $(f',f)$ under the orientation preserving linear transformation $(x,y)\mapsto (x-ay,y)$ of $\R^2$. Hence both curves have the same rotation number and Claim \ref{cl:one} indeed provides the initial step $n=1$.

The inductive step is based on the next lemma.

\begin{lemma} \label{lm:div}
Divide $p(x)$ by $q(x)$ with remainder:
\begin{equation} \label{eq:root}
p(x)=(x-c)q(x)-r(x).
\end{equation}
Then the pair of polynomials $(q(x),r(x))$  has only real roots, and these roots are interlaced.
\end{lemma}

\begin{proof}
Since $p$ and $q$ are monic polynomials, the degree of $r$ is at most $n-2$, and it suffices to show that $r(x)$ has a root between any two consecutive roots $b_i$ and $b_{i+1}$ of $q(x)$.
 
Since the roots of $p(x)$ are simple, the signs of $p(b_i)$ and $p(b_{i+1})$ are opposite. It follows from (\ref{eq:root}) that the signs of $r(b_i)$ and $r(b_{i+1})$ are opposite as well, therefore $r(x)$ has a root between $b_i$ and $b_{i+1}$.
\end{proof} 

As before, we turn the polynomial $r(x)$ into a differential operator $\rh$ and set $H(t)=\rh(f(t))$.
We consider the curves $\G(t)=(F(t),G(t))$ and $\g(t)=(G(t),H(t))$. 

For a point $x\notin\g\cup\G$, the induction step consists of deforming the curve $\G(t)$ to $\g(t)$ through curves $\G_s(t)$ inside $\R^2$ in such a way that, whenever $\G_s$ moves through the point $x$, the rotation number of $\G_s$ drops by $1$, proving $r_\G(x) \ge r_\g(x)$.

Equation (\ref{eq:root}) allows us to express the function $F$ in terms of $G$ and $H$:
$$
F(t)=G'(t)-cG(t)-H(t).
$$
Deform the curve $\G(t)$ as follows, where $s$ is the parameter of the deformation:
\begin{equation} \label{eq:def} 
\G_s(t):=((1-s)G'(t)-cG(t)-H(t),G(t)),\ 0\le s \le 1.
\end{equation}
We have $\G_0(t)=\G(t)$, and $\G_1(t)=(-cG(t)-H(t),G(t))$, a curve which is the image of the curve $\g(t)=(G(t),H(t))$ under an orientation preserving linear transformation. 

By the induction assumption, $\G_1$ satisfies the assertion of Theorem \ref{thm:L}. It remains to see how the rotation number about the point $x$ changes in the process of deformation.

The rotation number $r_{\Gamma_s}(x)$ changes by $\pm1$ whenever $s$ passes through a value $\sigma$ with $x\in \Gamma_\sigma$, say $x=\Gamma_\sigma(\tau)$. By the wall-crossing formula, the sign of the change of $r_{\Gamma_s}(x)$ equals\footnote{In Figure \ref{numbers} the wall-crossing formula is illustrated for a point moving through a given curve. In the current situation, we move curves through a given point, in this sense, the sign is rule is reversed.} the sign of the determinant made by the tangent vector $\Gamma_\sigma'(\tau)$ of the curve and the velocity vector $\frac{d\G_s(\tau)}{ds}\Big|_{s=\sigma}$ of the ``moving point'', i.e.,~the curve $s\mapsto\Gamma_s(\tau)$. It follows from (\ref{eq:def}) that this determinant is 
$$
\det\left(\Gamma_\sigma'(\tau),\frac{d\G_s(\tau)}{ds}\Big|_{s=\sigma}\right)
=
\begin{pmatrix}
\star &-G'(\tau)\\
G'(\tau) & 0
\end{pmatrix}\\
= (G'(\tau))^2.
$$
Since the determinant is positive, one has $r_\G(x) \ge r_\g(x)$, as needed.
\end{proof}

An instructive exercise is to check that the above argument provides an alternative proof of Claim \ref{cl:two}, deducing it from Claim \ref{cl:one}.

We conclude with a brief description of the context. Loewner's theorem is closely related to Loewner's conjecture that, in turn, is closely related to the Carath\'eodory conjecture that a smooth closed simply connected surface in $\R^3$ has at least two distinct umbilic points, that is, points where the principal curvatures are equal (both conjectures are still open, in spite of numerous attempts on the proofs). See, e.g., \cite{SG} for a survey of the Carath\'eodory conjecture.

The Loewner conjecture concerns the indices of isolated zeros of the planar vector fields whose two components are the real and imaginary parts of the function $(\partial_x+i\partial_y)^n H(x,y)$, where $H(x,y)$ is a smooth function and  $\partial_x+i\partial_y$ is the Cauchy-Riemann operator. The conjecture states that this index does not exceed $n$, see \cite{Tit}. 

Loewner's conjecture implies that of Carath\'edory. Claim \ref{cl:two} was part of Bol's treatment of the Carath\'eodory conjecture in \cite{Bo}. 

\begin{figure}[ht]
\centering
\includegraphics[width=.3\textwidth]{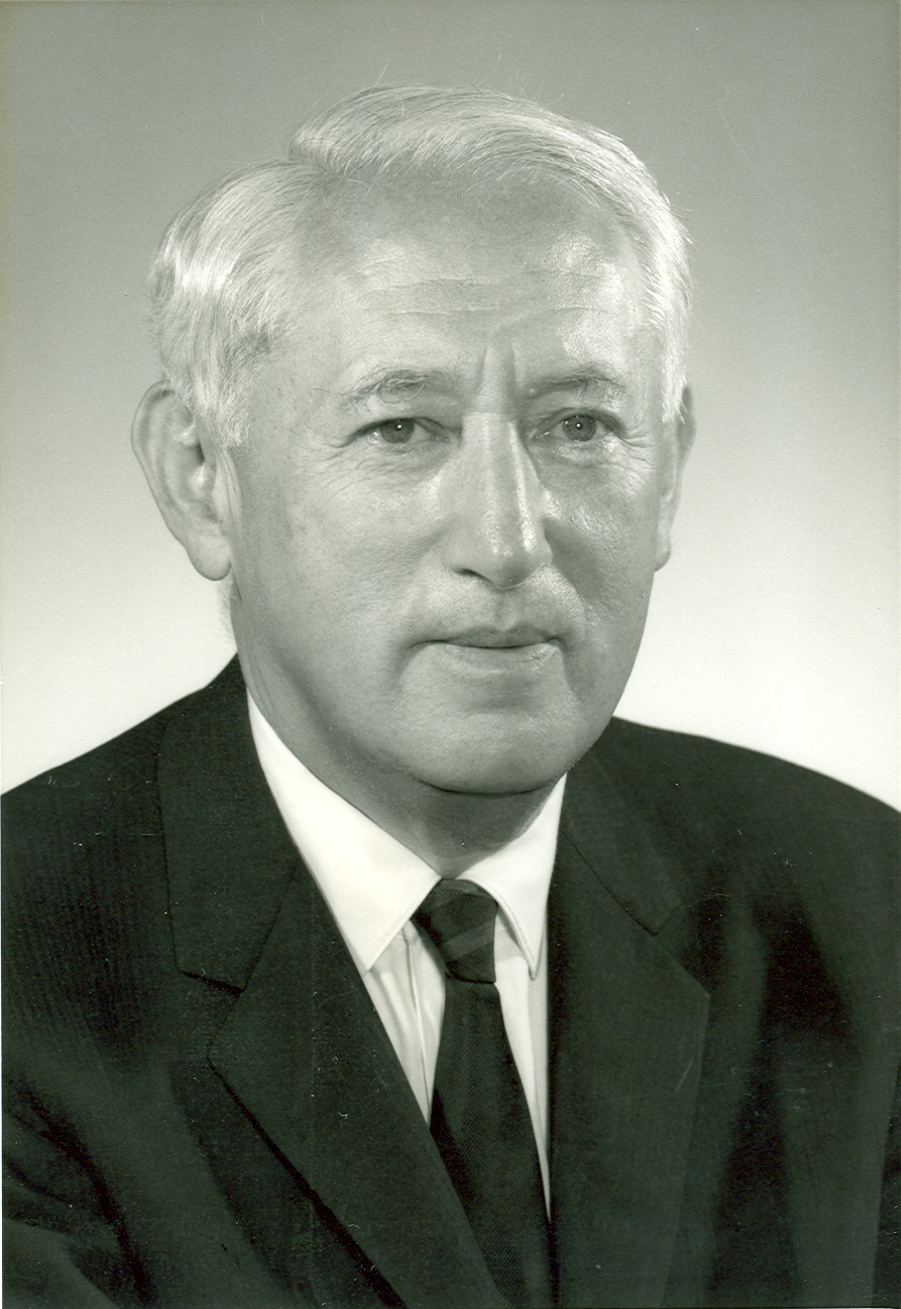}
\caption{Charles Loewner (1893--1968).
}
\label{pic:loewner}
\end{figure}

For biographical information about  Loewner,  we refer to the St. Andrews MacTutor site \cite{St}.

\bigskip

{\bf Acknowledgements}.  We thank Rich Schwartz for creating a program that makes possible to experiment with holonomic curves.
ST is grateful to the Heidelberg University for its invariable hospitality.

PA acknowledge funding by the Deutsche Forschungsgemeinschaft (DFG, German Research Foundation) through Germany’s Excellence Strategy EXC-2181/1 - 390900948 (the Heidelberg STRUCTURES Excellence Cluster), the Transregional Colloborative Research Center CRC/TRR 191 (281071066). ST was supported by NSF grant DMS-2005444 and by a Mercator fellowship within the CRC/TRR 191.


\begin{thebibliography}{99}

\bibitem{Be} M. Bergqvist. {\it Classical invariants for holomic knots.} 
J. Knot Theory Ramifications {\bf 12} (2003),  751--765. 

\bibitem{BW} J. Birman, N. Wrinkle. {\it Holonomic and Legendrian parametrizations of knots.} J. Knot Theory Ramifications {\bf 9} (2000), 293--309.  

\bibitem{Bo} G. Bol. {\it \"Uber Nabelpunkte auf einer Eifl\"ache}. Math. Zeitschrift {\bf 49} (1943-1944), 389--410.

\bibitem{EW} T. Ekholm, M. Wolff. {\it Framed holonomic knots.} Algebr. Geom. Topol. {\bf 2} (2002), 449--463. 

\bibitem{GS} C. Gutierrez, J. Sotomayor. {\it Lines of curvature, umbilic points and Carath\'eodory conjecture.} Resenhas {\bf 3} (1998), 291--322.

\bibitem{Kl} T. Klotz. {\it On G. Bol's proof of Carath\'eodory's conjecture.} Comm. Pure Appl. Math. {\bf 12} (1959), 277--311. 
 
\bibitem{Lo} C. Loewner. {\it A topological characterization of a class of integral operators}.  Ann. of Math. {\bf 49} (1948), 316--332

\bibitem{SG} J. Sotomayor, R. Garcia. {\it  Lines of curvature on surfaces, historical comments and recent developments.} 
S\~ao Paulo J. Math. Sci. {\bf 2} (2008), 99--143. 

\bibitem{Tit} C. Titus. {\it A proof of a conjecture of Loewner and of the conjecture of Caratheodory on umbilic points.} Acta Math. {\bf 131} (1973), 43--77.

\bibitem{Va} V. Vassiliev. {\it Holonomic links and Smale principles for multisingularities.}
J. Knot Theory Ramifications {\bf 6} (1997),  115--123. 

\bibitem{St} \url{https://mathshistory.st-andrews.ac.uk/Biographies/Loewner/}

\end{thebibliography}
\end{document}